\documentclass{article}

\usepackage{geometry}
\usepackage[mathcal]{euscript}
\usepackage{hyperref}
\usepackage{amsmath,amssymb,amsthm}
\usepackage{mathtools}
\geometry{a4paper}
\usepackage{dutchcal}

\usepackage{newtxtext,newtxmath,helvet,courier}
\usepackage{fix-cm}

\DeclareMathAlphabet{\mathpzc}{OT1}{pzc}{m}{it}

\usepackage{xcolor}
\usepackage{cases}
\newtheorem{thm}{Theorem}[section]
\newtheorem{lemma}[thm]{Lemma}

\theoremstyle{remark}
\newtheorem{rem}[thm]{Remark}

\theoremstyle{definition}

\newtheoremstyle{Claim}{}{}{\itshape}{}{\itshape\bfseries}{:}{ }{#1}
\theoremstyle{Claim}

\newcommand{\R}{\mathbb{R}}

\newcommand{\eps}{\varepsilon}


\titlepage
\title{On the problem of maximal $L^q$-regularity for viscous Hamilton-Jacobi equations}
\date{\today}

\author{Marco Cirant and Alessandro Goffi}

\begin{document}

\maketitle

\begin{abstract} For $q>2, \gamma > 1$, we prove that maximal regularity of $L^q$-type holds for periodic solutions to $-\Delta u + |Du|^\gamma = f$ in $\R^d$, under the (sharp) assumption $q > d \frac{\gamma-1}\gamma$. 
\end{abstract}

\noindent
{\footnotesize \textbf{AMS-Subject Classification}}. {\footnotesize 35J61, 35F21, 35B65}\\
{\footnotesize \textbf{Keywords}}. {\footnotesize Maximal regularity, Khardar-Parisi-Zhang equation, Riccati equation, Bernstein method}


\section{Introduction}

We address here the so-called problem of maximal $L^q$-regularity for equations of the form
\begin{equation}\label{hj}
-\Delta u(x) + |Du(x)|^\gamma=f(x)\quad\text{ in } \R^d,
\end{equation}
where $\gamma > 1$, $f : \R^d \to \R$ is $1$-periodic (i.e. $f(x+z) = f(x)$ for all $x \in \R^d$, $z \in \mathbb{Z}^d$), $d \ge 1$; that is,
\begin{equation}\label{max}\tag{M}
\begin{gathered}
\text{for all $M > 0$, there exists $K > 0$ (possibly depending on $M, \gamma, q, d$) such that} \\
-\Delta u + |Du|^\gamma=f\quad\text{ in } \R^d, \quad \|f\|_{L^q(Q)} \le M \qquad \Longrightarrow \qquad \|\Delta u\|_{L^q(Q)} + \big\| |Du|^\gamma \big\|_{L^q(Q)} \le K,
\end{gathered}
\end{equation}
$Q$ being the $d$-dimensional unit cube $(-1/2,1/2)^d$. This regularity problem has been proposed by P.-L. Lions in a series of seminars and lectures (e.g. \cite{LionsSeminar, Napoli}), where he conjectured its general validity under the assumption
\begin{equation}\label{subc}\tag{A}
q > d \frac{\gamma-1}\gamma \qquad ( \ \text{and $q > 1$} \ ).
\end{equation}
Some special cases have been addressed in these seminars, but the general problem has remained so far unsolved, to the best of our knowledge. We present here a proof of \eqref{max} + \eqref{subc}, under the sole restriction $q > 2$ (which is always realized when $\gamma > 2/(d-2)$).

\medskip

Equations of the form \eqref{hj} are prototypes of semi-linear uniformly elliptic equations with super-linear growth in the gradient, and arise for example in ergodic stochastic control \cite{ArisawaLions} and in the theory of growth of surfaces \cite{KrugSpohn}. The study of regularity of their solutions has received recently a renewed interest in the theory of Mean Field Games \cite{CCPDE, LL07}. There is a vast literature on such equations and more general quasi-linear problems. While the existence of classical (or strong) solutions has been firstly investigated (see for example \cite{AC, LUbook, Lio80, Serrin}), the attention has been later on largely focused on the existence (and uniqueness) of solutions $u \in W^{1, \gamma}(Q)$ satisfying \eqref{hj} in the weak or generalized sense (typically with Dirichlet boundary conditions). See, for example, \cite{AP93, BarPo06, BMP84, BMP92, CC95, DaGP02, FM1, GT03, Bas} and more recent works \cite{AFM15, 4Nap, Dp08, DpG16}. It has been observed that due to the super-linear nature of the problem, its (weak) solvability requires $f \in L^q$, where
\[
q \ge d \frac{\gamma-1}\gamma.
\]
Such condition has been improved in the finer scale of Lorentz-Morrey spaces, and end-point situations typically require additional smallness assumptions \cite{FM2, HMV99}. It is worth observing that many results in the literature cover the case $1 < \gamma \le 2$, that is when the gradient term has at most {\em natural growth}. General results in the full range $\gamma > 1$, based on methods from nonlinear potential theory, appeared quite recently in \cite{MP16, P14, PhucAdv}.

Roughly speaking, properties \eqref{max}+\eqref{subc} say that if $f$ belongs to a sufficiently small Lebesgue space, then solutions should enjoy much better regularity than $W^{1,\gamma}$, namely be in $W^{1, q \gamma}(Q)$ (and even in $W^{2,q}(Q)$, by standard Calder\'on-Zygmund theory). Still, additional gradient regularity is typically achieved via methods that require much stronger hypotheses on the summability of $f$, being based on the classical or weak maximum principle: viscosity theory indeed requires $f$ to be bounded \cite{IshiiLions90}, while the Aleksandrov-Bakel'man-Pucci estimate needs $f \in L^d$, as in \cite{MaSoBook}. The situation is even worse when $\gamma > 2$, as one observes that general weak solutions are just H\"older continuous \cite{DaPo15}, so one has to select $u$ in a suitable class.

\medskip

Here, we look at solutions to \eqref{hj} that can be approximated by classical ones. Therefore, we will prove \eqref{max} in the form of an a priori estimate. It is known that in such a form, \eqref{max} cannot be expected in general if
\[
 1 < q \le d \frac{\gamma-1}\gamma,
\]
as described in Remark \ref{fail}. On the other hand, P.-L. Lions indicated that \eqref{max}+\eqref{subc} can be obtained in some particular cases. First, when $\gamma = 2$, the so-called Hopf-Cole transformation $v= e^{-u}$ reduces \eqref{hj} to a linear elliptic equation, and one has the result employing (maximal) elliptic regularity and the Harnack inequality. Special cases $d=1$ and $\gamma < d / (d-1)$ can be also treated. As a final suggestion, an integral version of the Bernstein method \cite{Lions85} could be implemented to prove \eqref{max} when $q$ is close enough to $d$ (see also \cite{LL}, and \cite{BardiPerthame} for further refinements of this technique), but the full regime \eqref{subc} seems to be out of range using these sole arguments.

\medskip

The Bernstein method is the starting point of our work. It consists in shifting the attention from the equation \eqref{hj} for $u$ to the equation for a suitable function of $|Du|^2$, i.e. $w=g(|Du|^2)$; if $g$ is properly chosen, the equation for $w$ enjoys a strong degree of coercivity with respect to $w$ itself, which stems from uniform ellipticity and the coercivity of the gradient term in \eqref{hj}. By a delicate combination of these two regularising effects, it is possible to produce a crucial estimate on superlevel sets of $|Du|$, i.e.
\begin{equation}\label{ine}
\left[\int_{\{|Du| \ge k\}} \Big(|Du| - k\Big)^{\gamma q}\right]^{\frac {d-2}{d}} \le \omega\Big(\big | \{|Du| \ge k\} \big |\Big) + \int_{\{|Du| \ge k\}} \Big(|Du| - k\Big)^{\gamma q}
\end{equation}
for any $k \ge 0$, where $\omega(t) \to 0$ as $t \to 0$. 
This inequality again reflects the super-linear nature of the problem, being the exponents in the two sides unbalanced. Nevertheless, it is possible to control on $\||Du|^\gamma\|_{L^q}$ as follows: \eqref{ine} guarantees that $\int_{\{|Du| \ge k\}} \Big(|Du| - k\Big)^{\gamma q}$ is either belonging to a neighborhood of zero, or to an unbounded interval (for $k$ large enough, but independent of $\||Du|^\gamma\|_{L^q}$). By the fact that $k \mapsto \int_{\{|Du| \ge k\}} \Big(|Du| - k\Big)^{\gamma q}$ is continuous and vanishes as $k \to \infty$, the second case can be ruled out, and boundedness of  $\int_{\{|Du| \ge k\}} \Big(|Du| - k\Big)^{\gamma q}$ can be then recovered up to $k = 0$. This second key step has been inspired by an interesting argument that appeared in \cite{gmp} (see also \cite{GMP14}), where $W^{1,2}$ estimates of (powers of) $u$ are obtained arguing similarly on superlevel sets of $|u|$.

\medskip

Our result reads as follows.

\begin{thm}\label{maint} Let $f \in C^1(Q)$, $d \ge 3$, $\gamma > 1 $ and
\[
q > d \frac{\gamma-1}\gamma, \qquad q > 2 .
\]
 For all $M > 0$, there exists $K = K(M, \gamma, q, d)> 0$ such that if $u \in C^3(Q)$ is a classical solution to \eqref{hj} and 
$$\|f\|_{L^q(Q)} + \|Du\|_{L^1(Q)} \le M,$$ then $$\|\Delta u\|_{L^q(Q)} + \big\| |Du|^\gamma \big\|_{L^q(Q)} \le K.$$

\end{thm}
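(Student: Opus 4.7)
The plan is to implement an integral version of the Bernstein method and couple it with a Sobolev argument on superlevel sets of $|Du|$. Setting $w := |Du|^2$, differentiating \eqref{hj} in $x_k$, multiplying by $u_{x_k}$, and summing in $k$ yields the Bochner-type identity
\[
-\tfrac12 \Delta w + |D^2u|^2 + Du \cdot D|Du|^\gamma = Du \cdot Df,
\]
while the equation combined with $|D^2u|^2 \ge (\Delta u)^2/d$ gives the pointwise coercive bound $|D^2u|^2 \gtrsim w^\gamma - f^2$. These two ingredients will drive the $L^q$ estimate.

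Next, I would test the identity against a function $\phi_k = g_k(|Du|)$, with $g_k$ a (regularised) version of $(|Du|-k)_+^{\gamma(q-2)}$, so that $\mathrm{supp}\,\phi_k \subseteq \{|Du|\ge k\}$. An integration by parts converts $\int \phi_k\, Du \cdot Df$ into terms that involve only $f$ and not $Df$ -- essential, since $f$ is only assumed in $L^q$ -- producing contributions of the form $\int \phi_k f \Delta u = \int \phi_k f(|Du|^\gamma - f)$ controlled by H\"older with $\|f\|_{L^q}\le M$. An analogous manipulation of $\int \phi_k\, Du\cdot D|Du|^\gamma$ creates cross-terms of type $\int g_k'(|Du|)|D^2u||Du|^{\gamma+2}$ that are absorbed into the good pieces $\int \phi_k |D^2u|^2$ and $\int \phi_k |Du|^{2\gamma}$ via Young's inequality. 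The leftover gradient energy rewrites as $\int |D\Psi_k|^2$ with $\Psi_k \sim (|Du|-k)_+^{(r+1)/2}$ for $r=\gamma(q-2)$, after which Sobolev embedding on the torus controls $\int (|Du|-k)_+^{(r+2)d/(d-2)}$ raised to the power $(d-2)/d$.

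Matching the Sobolev exponent $(r+2)d/(d-2)$ against the exponent $r+2\gamma = \gamma q$ arising from the coercive integral $\int \phi_k w^\gamma$, equality holds precisely at the critical threshold $q = d(\gamma-1)/\gamma$; the supercritical hypothesis $q > d(\gamma-1)/\gamma$ makes the Sobolev exponent strictly larger than $\gamma q$, so H\"older interpolation inserts a favourable factor $|\{|Du|\ge k\}|^{\theta}$ with $\theta>0$, producing the $\omega$-term of \eqref{ine}. The main obstacle I anticipate is exactly this matching step: pushing the energy estimate down to the sharp threshold requires the cross-terms arising from the integrations by parts to fit comfortably within the budget of the coercive pieces and the H\"older factor, and the auxiliary hypothesis $q>2$ seems to be what guarantees that the test function $(|Du|-k)_+^{\gamma(q-2)}$ lies in a suitable Sobolev space for the whole manipulation to make sense.

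Finally, with $F(k) := \int_{\{|Du|\ge k\}} (|Du|-k)^{\gamma q}$, the inequality \eqref{ine} in the form $F(k)^{(d-2)/d} \le \omega + F(k)$ forces $F(k)^{(d-2)/d}$ to lie either in a small interval $[0,\epsilon_1(\omega)]$ or in $[\epsilon_2(\omega),\infty)$ with a gap of positive size, as long as $\omega$ is small enough; this is ensured for $k\ge k_0(M)$ via Chebyshev applied to $\|Du\|_{L^1}\le M$. Since $F$ is continuous and $F(k)\to 0$ as $k\to\infty$, it stays in the small regime down to $k_0$, giving $F(k_0)\le C(M)$. Splitting $\int |Du|^{\gamma q}$ at level $k_0$ bounds the bulk by $k_0^{\gamma q}|Q|$ and the tail by $C(F(k_0)+k_0^{\gamma q})$, yielding the bound on $\||Du|^\gamma\|_{L^q}$; the estimate on $\|\Delta u\|_{L^q}$ follows immediately from $\|\Delta u\|_{L^q}\le \||Du|^\gamma\|_{L^q}+\|f\|_{L^q}$.
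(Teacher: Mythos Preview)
Your overall strategy---integral Bernstein identity, testing against a power of the truncation on superlevel sets, Sobolev embedding, then the dichotomy/continuity argument on $k\mapsto F(k)$---is exactly the paper's, and your final paragraph (deducing $\||Du|^\gamma\|_{L^q}\le K$ from \eqref{ine}) is correct.

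The gap is in the exponent matching, which is the heart of the argument, and your description has the mechanism backwards. You want the Sobolev exponent to \emph{coincide} with $\gamma q$ so that both sides of \eqref{ine} carry the same integral $F(k)$; with your choice $r=\gamma(q-2)$ the Sobolev exponent $(r+2)\frac{d}{d-2}$ strictly exceeds $\gamma q$, and ``H\"older inserts a favourable factor $|\Omega_k|^\theta$'' does not then yield $F(k)^{(d-2)/d}\le F(k)+\omega$: the $|\Omega_k|$ power lands on the wrong side, and you never say what produces the $F(k)$ on the right. The paper fixes all of this by two devices you are missing. First, it replaces $|Du|^2$ by $w=\frac{2}{1+\delta}(1+|Du|^2)^{(1+\delta)/2}$ with small $\delta>0$; this is not cosmetic, since the mixed Hessian term $4g''\sum_j(Du\cdot Du_{x_j})^2$ has the wrong sign and is controlled only through $g'+2sg''\ge\delta g'$. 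Second, it introduces an \emph{intermediate} exponent $p=\frac{2}{\gamma'}+\frac{d-2}{d}q$ strictly between the critical value $d\frac{\gamma-1}{\gamma}$ and $q$, and chooses the test exponent $\beta=\frac{\gamma(p-2)+1-\delta}{1+\delta}$ so that the Sobolev exponent $(\beta+1)\frac{d}{d-2}$ equals $\frac{\gamma q}{1+\delta}$ \emph{exactly}. The $\omega(|\Omega_k|)$ term does not come from a Sobolev mismatch; it comes from estimating the $f$-contributions with $\|f\|_{L^p}$ and using the slack $p<q$ to extract $|\Omega_k|^{(q-p)/q}$. The $F(k)$ on the right of \eqref{ine} arises from the residual integral $\int w_k^{\gamma p/(1+\delta)}$ left by the estimate of $\int |Du|^{\gamma-2}Du\cdot Dw_k\,w_k^\beta$ and from the lower-order term $\int w_k^{\beta+1}$ in Sobolev, both majorised by $\int w_k^{\gamma q/(1+\delta)}+C|\Omega_k|$ via Young. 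The restriction $q>2$ enters because one needs $p>2$, hence $\beta>1$, so that the test function $w_k^\beta$ and its gradient are admissible---your intuition here was right, but for the wrong exponent.
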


We stress that our approach is not perturbative, in the sense that the gradient term is not treated as a perturbation of a uniformly elliptic operator (which would be natural under the growth condition $\gamma < 2$),  nor vice-versa. It applies also to equations that have the gradient term with reversed sign (Remark \ref{gham}), and to solutions in a strong sense (Remark \ref{strong}).
As far as periodicity is concerned, it is common in applications to ergodic control and Mean Field Games. The study of \eqref{max} in cases where $u$ satisfies boundary conditions, or a local version of the estimate, will be matter of future work. We also conjecture that \eqref{max} holds in the limiting case $q = d \frac{\gamma-1}\gamma$ under an additional smallness assumption on $M$, which controls the norm of $\|f\|_q$. This would be coherent with known results on the existence of weak solutions. Nevertheless, it does not seem evident how to adapt our proof to cover this end-point case.

Finally, our technique does not apply to the parabolic counterpart of \eqref{max}. In this direction, some results based on rather different duality methods developed in \cite{cg20} will appear in a forthcoming work \cite{CGpar}.\\
\par\medskip
{\bf Acknowledgements.} The authors are members of the Gruppo Nazionale per l'Analisi Matematica, la Probabilit\`a e le loro Applicazioni (GNAMPA) of the Istituto Nazionale di Alta Matematica (INdAM). This work has been partially supported by 
the Fondazione CaRiPaRo
Project ``Nonlinear Partial Differential Equations:
Asymptotic Problems and Mean-Field Games". 
\section{Proof of the main theorem}

For the sake of brevity, we will often drop the $x$-dependance of $u, Du, ...$, and the $d$-dimensional Lebsesgue measure $d x$ under the integral sign. $(x)^+ = \max\{x,0\}$ will denote the positive part of $x$, and for any $p > 1$, $p' = p/(p-1)$.

This section is devoted to the proof of Theorem \ref{maint}, which will be based on the following lemma.

\begin{lemma}\label{mainl} There exists $\delta \in (0,1)$ (depending on $\gamma, q, d$) and $\omega: [0,+\infty) \to [0,+\infty)$ (depending on $M, \gamma, q, d$) such that
\[
\lim_{t \to 0} \ \omega(t) = 0,
\]
and for all $k \ge 1$,
\begin{equation}\label{mainest}
\left(\int_Q \left(\Big( (1 + |Du|^2)^{\frac{1+\delta}2 } - k \Big)^+\right)^{\frac{q\gamma}{1+\delta}} \right)^{\frac {d-2}{d}} \le\omega\Big(|\{ 1 + |Du|^2 > k^{\frac{2}{1+\delta}} \}|\Big)+  \int_Q \left(\Big( (1 + |Du|^2)^{\frac{1+\delta}2 } - k \Big)^+\right)^{\frac{q\gamma}{1+\delta}}.
\end{equation}
\end{lemma}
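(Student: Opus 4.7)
I will implement an integral Bernstein method on the transformed quantity $w := v^{(1+\delta)/2}$, with $v := 1 + |Du|^2$ and $\delta \in (0,1)$ a small parameter to be fixed depending on $\gamma, q, d$. Differentiating \eqref{hj} in $x_i$, multiplying by $u_i$ and summing, one finds after direct computation that $w$ satisfies pointwise
\[
-\Delta w + \gamma|Du|^{\gamma-2}\,Du\cdot Dw + (1+\delta)\,v^{(\delta-1)/2}|D^2u|^2 = \tfrac{1-\delta}{1+\delta}\,w^{-1}|Dw|^2 + (1+\delta)\,v^{(\delta-1)/2}\,Du\cdot Df.
\]
Since $\delta < 1$, the anti-coercive term $w^{-1}|Dw|^2$ on the right is dominated pointwise by the Hessian term via $|Dw|^2 \le (1+\delta)^2 v^\delta|D^2u|^2$, and can be absorbed leaving a residual coercivity $\gtrsim \delta\,v^{(\delta-1)/2}|D^2u|^2$. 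Using $|D^2u|^2 \ge d^{-1}(\Delta u)^2 = d^{-1}(|Du|^\gamma - f)^2$, this provides the super-linear coercivity $\gtrsim \delta\, v^{(\delta-1)/2 + \gamma}$ modulo an additive $O(v^{(\delta-1)/2}f^2)$ error; the transport term $\gamma|Du|^{\gamma-2}Du\cdot Dw$ can be absorbed by Cauchy–Schwarz and Young into the available coercive energies, optionally after one further integration by parts that converts $\dive(|Du|^{\gamma-2}Du)$ via the PDE.

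The central step is to test the resulting pointwise inequality against $W^r$, where $W := (w-k)^+$ and $r$ is chosen so that $r+1 = q\gamma(d-2)/(d(1+\delta))$, i.e. precisely the exponent dictated by the Sobolev–Poincaré inequality applied to $\phi := W^{(r+1)/2}$:
\[
\bigg(\int_Q W^{q\gamma/(1+\delta)}\bigg)^{(d-2)/d} \le C\int_Q W^{r-1}|DW|^2 + C\int_Q W^{r+1}.
\]
Periodicity on the torus kills every boundary contribution, so the $-\Delta w$ term yields exactly $r\int_Q W^{r-1}|DW|^2$, and the Hessian coercivity yields $\gtrsim \delta\int_Q v^{(\delta-1)/2+\gamma}W^r$; both are exactly the quantities Sobolev needs on its right-hand side. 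The delicate right-hand side $\int_Q v^{(\delta-1)/2}Du\cdot Df\,W^r$ is treated by one more integration by parts moving $D$ off $f$, producing a finite sum of integrals of the form $\int_{\{w>k\}}|f|^a\,|Du|^b|D^2u|^c\,W^d|DW|^e$ with $a\in\{1,2\}$. Each is estimated by Hölder, using only $\|f\|_{L^q(Q)}\le M$ together with the elementary bound $\int_{\{w>k\}}|f|^a \le M^a|\{w>k\}|^{1-a/q}$, as an $\omega(|\{w>k\}|)$-contribution (of the shape $C t^\alpha$ with $\alpha>0$) plus a fraction of $r\int W^{r-1}|DW|^2 + \delta\int v^{(\delta-1)/2+\gamma}W^r$, the latter absorbed back into the left-hand side by Young's inequality. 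The zeroth-order Sobolev remainder is handled by a further Hölder step, $\int W^{r+1}\le |\{w>k\}|^{2/d}\Phi^{(d-2)/d}$ with $\Phi := \int_Q W^{q\gamma/(1+\delta)}$, contributing additionally to $\omega$; a final absorption of a small multiple of $\Phi^{(d-2)/d}$ into the left yields \eqref{mainest}.

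\textbf{Main obstacle.} The crux of the argument is the Hölder bookkeeping described above: under only the sharp assumption $q>d(\gamma-1)/\gamma$, the super-linear coercivity $\delta\,v^{(\delta-1)/2+\gamma}$ together with the Sobolev gradient energy $r\int W^{r-1}|DW|^2$ must jointly absorb every product of $f$, $Du$, $D^2u$, $W$, $DW$ arising from the $Df$-term, while keeping $\omega$ vanishing at zero and depending only on $M$ (not on the solution). At the endpoint $q=d(\gamma-1)/\gamma$ the super-linear coercivity alone is exactly critical in the Sobolev sense; for $q$ strictly above it is sub-critical, and it is the Sobolev-induced gradient energy that bridges the gap. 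The parameter $\delta$ must be chosen with care—small enough to keep all Hölder exponents admissible under the sharp $q$-condition, yet not so small that the coercivity, which is only of order $\delta$, is destroyed—a trade-off ultimately governed by the margin $q - d(\gamma-1)/\gamma$. The auxiliary hypothesis $q>2$ enters precisely in ensuring the test $W^r$ is a legitimate $W^{1,2}(Q)$-function, i.e.\ that the exponent $r+1 = q\gamma(d-2)/(d(1+\delta))$ exceeds $1$.
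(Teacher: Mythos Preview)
Your plan is the paper's own route: set $w\simeq(1+|Du|^2)^{(1+\delta)/2}$, test its equation against $((w-k)^+)^r$ with $(r+1)\tfrac{d}{d-2}=\tfrac{q\gamma}{1+\delta}$, exploit $|D^2u|^2\ge d^{-1}(\Delta u)^2$ together with the PDE for super-linear coercivity, move the derivative off $f$ by integration by parts, and close via Sobolev. Where the sketch breaks down is the endgame, and the issue is structural rather than cosmetic.

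You write that $\int W^{r+1}\le|\Omega_k|^{2/d}\Phi^{(d-2)/d}$ ``contributes to $\omega$'' and that a ``small multiple of $\Phi^{(d-2)/d}$'' is then absorbed into the left. Neither move is available: $\omega$ must depend only on $|\Omega_k|$ (not on $\Phi$), and the prefactor $|\Omega_k|^{2/d}$ is \emph{not} known a priori to be small --- that is precisely what the lemma is set up to circumvent. What the paper does instead is bound $\int W^{r+1}$ (and an analogous residual from the transport term, see below) via Young's inequality by $\tfrac12\Phi+C|\Omega_k|$; the resulting $\Phi$ sits on the right-hand side of \eqref{mainest} and is \emph{never} absorbed. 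The entire point of the lemma is the unbalanced shape $\Phi^{(d-2)/d}\le\omega+\Phi$, which is then exploited in the proof of Theorem~\ref{maint} through the continuity-in-$k$ dichotomy. A related gap is your claim that the transport term $\gamma\int|Du|^{\gamma-2}Du\cdot DW\,W^r$ is simply absorbed: after Young one faces $C_\epsilon\int|Du|^{2\gamma-2}W^{r+1}$, whose natural bound $\le C_\epsilon\int w^{(2\gamma+\delta-1)/(1+\delta)}W^r$ carries a constant $\sim\gamma^2/\epsilon$ against a coercivity constant only $\sim\delta$, while $\epsilon$ must stay below $r$ to protect the gradient energy. The paper resolves this with a H\"older split at conjugate exponents $(\eta,\eta')$, $\eta=(2\gamma+\delta-1)/(1+\delta)$: one factor is absorbed with the correct small constant, and the other is a residual $\int W^{p\gamma/(1+\delta)}$ (with the intermediate exponent $p=\tfrac{2}{\gamma'}+\tfrac{d-2}{d}q<q$, which is in fact implicit in your choice of $r$) that once more becomes $\tfrac12\Phi+C|\Omega_k|$ by Young. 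In short, the $\Phi$ on the right of \eqref{mainest} is not an artefact to be absorbed away but the destination of these residuals.
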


We postpone the proof of the lemma, and show first how \eqref{mainest} yields the conclusion of Theorem \ref{maint}. Setting $Y_k := \int_Q \left(\Big( (1 + |Du|^2)^{\frac{1+\delta}2 } - k \Big)^+\right)^{\frac{q\gamma}{1+\delta}}$, then \eqref{mainest} reads
\begin{equation}\label{mainesty}
Y_k^{\frac {d-2} {d}} \le Y_k + \omega\Big(|\{ 1 + |Du|^2 > k^{\frac{2}{1+\delta}} \}|\Big) \qquad \text{for all $k \ge 1$}.
\end{equation}
Note that the function $F:Z\longmapsto Z^\frac{d-2}{d} - Z$ has a unique maximizer $Z^*=\left(\frac{d-2}{d}\right)^{\frac{d}{2}}$ whose corresponding value is $F(Z^*)= F^* > 0$ (which depends on $d$ only). For any $0\leq \omega < F^*$ the equation
\[
F(Z)=\omega
\]
has two roots $0 < Z^-(\omega)<Z^*<Z^+(\omega)$. Since $\lim_{t \to 0} \ \omega(t) = 0$, pick $t^* = t^*(M, \gamma, p, d)$ such that $\omega(t) < F^*$ for all $t < t^*$. By Chebyshev's inequality,
\[
\sqrt{k^{\frac{2}{1+\delta}}-1} > \frac{\|Du\|_{L^1(Q)}}{t^*} \quad \Longrightarrow \quad |\{ 1 + |Du|^2 > k^{\frac{2}{1+\delta}} \}| < t^*,
\]
hence \eqref{mainesty} yields the alternative 
\[
\forall k > \left(\frac{\|Du\|_{L^1(Q)}}{t^*} + 1\right)^{\frac{1+\delta}{2}} =: k^*, \quad Y_k < Z^* \text{\ or  \ }Y_k >  Z^*.
\]
Since $u \in C^3(Q)$, the function $k\longmapsto Y_k$ is continuous and tends to zero as $k\to\infty$ (it eventually vanishes for $k$ large). Hence we deduce that
\[
\forall k > k^*, \quad Y_k < Z^*,
\]
and finally
\[
\big\| |Du|^\gamma \big\|_{L^q(Q)}^{\frac{1+\delta}\gamma} =  \big\| |Du|^{1+\delta} \big\|_{L^{\frac{\gamma q}{1+\delta}}(Q)} \le
\big\| \Big((1+|Du|^2)^{\frac{1+\delta}2} - k^*\Big)^+ + k^*\big\|_{L^{\frac{\gamma q}{1+\delta}}(Q)} \le
(Z^*)^{\frac{1+\delta}{\gamma q}} + k^*.
\]
The estimate on $\|\Delta u\|_{L^p(Q)}$ is then straightforward.
\bigskip

Having proven Theorem \ref{maint}, we now come back to the main estimate \eqref{mainest}.


\begin{proof}[Proof of Lemma \ref{mainl}] 
Let $w(x):=g(|Du(x)|^2)$, where $g(s) =g_\delta (s)= \frac{2}{1+\delta}(1+s)^{\frac{1+\delta}{2}}$, $\delta\in(0,1)$ to be chosen later. Note that, for any $\delta\in(0,1)$, $g$ enjoys the following properties: for all $s \ge 0$,
\begin{gather}
g'(s)s^{\frac12} \le (1+s)^{\frac\delta2},\\
g'(s) + 2sg''(s) \ge \delta g'(s). \label{g2}
\end{gather}
Note also that
\[
g'(|Du(x)|^2)=(1+|Du(x)|^2)^{\frac{\delta-1}{2}}=\left(\frac{\delta+1}{2}w \right)^{\frac{\delta-1}{1+\delta}}
\]
($g, g', g''$ below will be always evaluated at $|Du(x)|^2$).

Define $w_k=(w-k)^+\in W^{1,\infty}(Q)$ and set $\Omega_k:=\{w>k\}$. We now use $\varphi= \varphi^{(j)} = -2\partial_j(g' \, \partial_j u \, w_k^\beta)$, $j = 1, \ldots, d$ and $\beta > 1$ to be chosen later
%
as test functions in the Hamilton-Jacobi equation. First, integrating by parts  and substituting $w_{x_i}=2g'Du\cdot Du_{x_i}$,
\begin{multline*}
\sum_j \int_Q Du \cdot D\varphi = -2 \sum_{i,j} \int_{Q} \partial_iu\cdot \partial_j \big(\partial_i(g'\partial_j uw_k^\beta) \big)=2 \sum_{i,j} \int_{Q}\partial_{ij} u \, \partial_i(g'\partial_j uw_k^\beta) =\\ 
 4\int_{Q}g'' \sum_j(Du\cdot Du_{x_j})^2w_k^\beta+2\int_{Q}|D^2u|^2g'w_k^\beta+\beta\int_{Q}w_k^{\beta-1}Dw_{k} \cdot D w .
\end{multline*}
Moreover, again integrating by parts,
\[
-2 \sum_j \int_{Q} |Du|^\gamma\partial_j(g'\partial_j uw_k^\beta)= \gamma \sum_j \int_{Q}w_k^\beta|Du|^{\gamma-2} Du \cdot Du_{x_j} 2g' \, \partial_j u \, w_k^\beta = \gamma \int_{Q}|Du|^{\gamma-2}Du\cdot Dw \ w_k^\beta .
\]
Noting that $w_k^{\beta-1} Dw = w_k^{\beta-1} Dw_k$ on $Q$, we end up with
\begin{multline}\label{ineq1}
\beta\int_{Q}w_k^{\beta-1}|Dw_{k}|^2+\int_{Q}\Big(4g''\sum_j(Du\cdot Du_{x_j})^2+2g'|D^2u|^2\Big)w_k^\beta+\gamma \int_{Q}|Du|^{\gamma-2}Du\cdot Dw_k\ w_k^\beta\\
=-2\int_{Q}f\mathrm{div}(g'Du\, w_k^\beta) .
\end{multline}
Note also that in \eqref{ineq1} integrating on $Q$ and on $\Omega_k$ is the same, by the fact that $w_k$ vanishes on $Q \setminus \Omega_k$. We use first Cauchy-Schwarz inequality, the equation \eqref{hj} and the inequality $(a-b)^2\geq \frac{a^2}{2}-2b^2$ for every $a,b\in\R$ to get
\[
|D^2u|^2\geq\frac1d(\Delta u)^2\geq \frac{1}{2d}|Du|^{2\gamma}-\frac{2}{d}f^2\ .
\]
Moreover, again by Cauchy-Schwarz inequality (be careful about $g'' < 0$) and \eqref{g2},
\[
g'|D^2u|^2+2g''\sum_j(Du\cdot Du_{x_j})^2\geq (g' +  2|Du|^2g'') |D^2u|^2 \ge \delta g' |D^2u|^2\ .
\]
The above inequalities then yield
\[
2g'|D^2u|^2+4g''\sum_j(Du\cdot Du_{x_j})^2\geq \delta g' |D^2u|^2+\frac{\delta}{2d}|Du|^{2\gamma}g'-\frac{2\delta}{d}f^2g'. 
\]
Note that for $\gamma>1$ it holds
\[
(1+|Du|^2)^\gamma\leq 2^{\gamma-1}(1+|Du|^{2\gamma}), \quad \text{so} \quad |Du|^{2\gamma}\geq \frac{(1+|Du|^2)^\gamma}{2^{\gamma-1}}-1
\]
and hence, we are allowed to conclude
\[
\frac{\delta}{2d}|Du|^{2\gamma}g' \geq \frac{\delta}{2^\gamma d}(1+|Du|^2)^\gamma g'-\frac{\delta}{2d} g' =  \frac{\delta}{2^\gamma d}(1+|Du|^2)^{\gamma + \frac{\delta-1}2} -\frac{\delta}{2d}g' \ .
\]
This gives, going back to \eqref{ineq1} and substituting $(1+|Du|^2)^{\frac{1}{2}}=\left(\frac{\delta+1}{2}w\right)^{\frac{1}{1+\delta}}$,
\begin{multline}\label{ineq15}
\beta\int_{\Omega_k}w_k^{\beta-1}|Dw_{k}|^2+\delta\int_{\Omega_k} g'w_k^\beta|D^2u|^2+ c_1 \int_{\Omega_k}w^{\frac{2\gamma + \delta-1}{1+\delta}} w_k^\beta \leq \\ 
\frac{\delta}{2d} \int_{\Omega_k} (1+4f^2)g ' w_k^\beta
 -2\int_{\Omega_k}f\Delta u \, g' w_k^\beta-4\int_{\Omega_k}fg''Du \cdot (D^2u \, Du ) w_k^\beta \\
 -2\beta\int_{\Omega_k} fg'Du\cdot Dw_k w_k^{\beta-1} - \gamma \int_{\Omega_k}|Du|^{\gamma-2}Du\cdot Dw_k\ w_k^\beta,
\end{multline}
where $c_1 = c_1(\delta, d, \gamma) > 0$.

\medskip

We now estimate the five terms on the right hand side of the previous inequality. The first three terms are somehow similar: using Cauchy-Schwarz inequality and that $2sg''\leq g'$, we have for some $c_2 = c_2(\delta,d) > 0$ that
\begin{multline}\label{ineq18}
\frac{\delta}{2d} \int_{\Omega_k} (1+4f^2)g ' w_k^\beta
 -2\int_{\Omega_k}f\Delta u \, g' w_k^\beta-4\int_{\Omega_k}fg''Du \cdot (D^2u \, Du ) w_k^\beta \le \\
\frac{\delta}{2d} \int_{\Omega_k} (1+4f^2)g ' w_k^\beta +  (2d+2) \int_{\Omega_k}f|D^2u|g' w_k^\beta \leq \\ \delta \int_{\Omega_k}|D^2 u|^2g'w_k^\beta+ c_2 \int_{\Omega_k}(1+f^2)w^{\frac{1-\delta}{1+\delta}}w_k^\beta .
\end{multline}
At this stage, we make some choices for the coefficients. 
Recalling that $ \frac{d}{\gamma'} < q$, we take
\begin{equation}\label{pcho}
p=\frac2d \frac{d}{\gamma'}+\frac{d-2}{d} q, \qquad \text{and} \quad \beta =\frac{1}{1+\delta}[\gamma(p-2)+1-\delta].
\end{equation}
Note that $\frac{d}{\gamma'} < p < q$. Assuming that $p > 2$ (which is always true when $\gamma>\frac{d}{d-2}$, otherwise see the remark at the end of the proof), we have  $\beta > 1$ whenever $\delta$ is close enough to zero. Moreover,
\begin{gather}
\frac{2\gamma+\delta-1}{1+\delta}=\frac{\delta-1}{1+\delta}\frac{p}{p-2}+\beta\frac{2}{p-2}, \label{cho1} \\
(\beta+1)\frac{d}{d-2} = \frac{\gamma q}{1+\delta}\label{cho2}\ .
\end{gather}
Therefore, we apply H\"older's inequality (with conjugate exponents $p/2$ and $p/(p-2)$) and Young's inequality, and then $w_k\leq w$ together with \eqref{cho1} to obtain
\begin{multline*}
c_2 \int_{\Omega_k} (1+f^2)w^{\frac{\delta-1}{1+\delta}}w_k^\beta\leq c_2 \left(\int_{\Omega_k}(1+f^2)^{\frac p2}\right)^{\frac2p}\left(\int_{\Omega_k}w^{\frac{\delta-1}{1+\delta}\frac{p}{p-2}}w_k^{\beta\frac{p}{p-2}}\right)^{1-\frac2p}\\
 \leq c_3 \int_{\Omega_k}(1+f)^p+\frac{c_1}3 \int_{\Omega_k}w^{\frac{\delta-1}{1+\delta}\frac{p}{p-2}} w_k^{\beta\frac{2}{p-2}} w_k^\beta \\\leq c_3\int_{\Omega_k}(1+f)^p+ \frac{c_1}3\int_{\Omega_k}w^{\frac{\delta-1}{1+\delta}\frac{p}{p-2}+\beta\frac{2}{p-2}}w_k^\beta\\
 \leq c_3 \int_{\Omega_k}(1+f)^p+ \frac{c_1}3 \int_{\Omega_k}w^{\frac{2\gamma+\delta-1}{1+\delta}}w_k^\beta,
\end{multline*}
where $c_3 = c_3(\delta, d, \gamma, p) > 0$. Plugging the previous inequality into \eqref{ineq18} yields
\begin{multline}\label{ineq12}
\frac{\delta}{2d} \int_{\Omega_k} (1+f^2)g ' w_k^\beta
 -2\int_{\Omega_k}f\Delta u \, g' w_k^\beta-4\int_{\Omega_k}fg''Du \cdot (D^2u \, Du ) w_k^\beta \le \\ \delta \int_{\Omega_k}|D^2 u|^2g'w_k^\beta + c_3 \int_{\Omega_k}(1+f)^p+ \frac{c_1}3 \int_{\Omega_k}w^{\frac{2\gamma+\delta-1}{1+\delta}}w_k^\beta.
\end{multline}

The fourth term in \eqref{ineq15} is a bit more delicate, we proceed as follows. Use first that $s^{\frac12}g'(s)\leq (1+s)^{\frac{\delta}{2}}$, H\"older's and Young's inequality to get
\begin{multline*}
2\beta\int_{\Omega_k}fg'Du\cdot Dw_k w_k^{\beta-1}\leq 2\beta \int_{\Omega_k}f(1+|Du|^2)^{\frac{\delta}{2}}|Dw_k| w_k^{\beta-1}\leq \\ 
 2\beta\left(\int_{\Omega_k}w_k^{\beta-1}|Dw_k|^2\right)^{\frac12}  \left(\int_{\Omega_k}f^q\right)^{\frac{1}{q}}\left(\int_{\Omega_k}(1+|Du|^2)^{\frac{\delta }{2}\frac{pq}{q-p}}\right)^{\frac{q-p}{pq}} \left(\int_{\Omega_k}w_k^{(\beta-1)\frac{p}{p-2}}\right)^{\frac{p-2}{2p}}\leq \\
\frac{\beta}{3}\int_{\Omega_k}w_k^{\beta-1}|Dw_k|^2+\frac{c_1}3\int_{\Omega_k}  w_k^{(\beta-1)\frac{p}{p-2}}  +c_4\left(\int_{\Omega_k}f^q\right)^{\frac{p}{q}}\left(\int_{\Omega_k}(1+|Du|^2)^{\frac{\delta}{2}\frac{p q}{q-p}}\right)^{\frac{q-p}{q}},
\end{multline*}
where $c_4 = c_4(\delta,d,\gamma,\beta) > 0$. Since $k \ge 1$, $w \ge 1$ on $\Omega_k$, hence, recalling also \eqref{cho1},
\[
\int_{\Omega_k}  w_k^{(\beta-1)\frac{p}{p-2}}  = \int_{\Omega_k}w_k^{\beta\frac{2}{p-2}-\frac{p}{p-2}} w_k^{\beta} \leq \int_{\Omega_k}w^{\beta\frac{2}{p-2}-\frac{p}{p-2}} w_k^{\beta} \leq
\int_{\Omega_k} w^{\beta\frac{2}{p-2}-\frac{1-\delta}{1+\delta}\frac{p}{p-2}} w_k^{\beta} = \int_{\Omega_k}w^{\frac{2\gamma+\delta-1}{1+\delta}}w_k^\beta,
\]
so
\begin{multline}\label{ineq2}
2\beta\int_{\Omega_k}fg'Du\cdot Dw_k w_k^{\beta-1}\leq 2\beta \int_{\Omega_k}f(1+|Du|^2)^{\frac{\delta}{2}}|Dw_k| w_k^{\beta-1}\leq \\ 
 \frac{\beta}{3}\int_{\Omega_k}w_k^{\beta-1}|Dw_k|^2+\frac{c_1}3 \int_{\Omega_k}w^{\frac{2\gamma+\delta-1}{1+\delta}}w_k^\beta   +c_4\|f\|_{L^q(Q)}^p \left(\int_{\Omega_k}(1+|Du|^2)^{\frac{\delta}{2}\frac{p q}{q-p}}\right)^{\frac{q-p}{q}}.
\end{multline}

We now focus on the fifth term in \eqref{ineq15}. By Young's inequality,
\begin{equation}\label{ineq3}
- \gamma \int_{\Omega_k}|Du|^{\gamma-2}Du\cdot Dw_k\ w_k^\beta \le \frac{3 \gamma^2}{4 \beta} \int_{\Omega_k}|Du|^{2\gamma-2}\ w_k^{\beta+1} + \frac{\beta}3 \int_{\Omega_k}|Dw_k|^2 \ w_k^{\beta-1}.
\end{equation}
Furthermore, letting
\[
\eta=\frac{2\gamma+\delta-1}{1+\delta}, \qquad \text{\Big(\, so that $\beta + \eta = \frac{p\gamma}{1+\delta}$\, \Big)}
\]
we get (it holds $s^{\frac12} \le g^{\frac1{1+\delta}}$)
\[
 \int_{\Omega_k}|Du|^{2\gamma-2}\ w_k^{\beta+1} \le \int_{\Omega_k}w^{\frac{2\gamma-2}{1+\delta}}w_{k}^{\beta+1}=\int_{\Omega_k}w^{\eta-1}w_{k}^{\beta/\eta'}w_k^{\beta/\eta+1}\leq \left(\int_{\Omega_k} w^\eta w_k^\beta\right)^{\frac{1}{\eta'}}\left(\int_{\Omega_k} w_k^{\beta+\eta}\right)^{\frac1\eta}.
\]
Plugging the previous inequality into \eqref{ineq3} and using again  Young's inequality leads to
\begin{equation}\label{ineq5}
- \gamma \int_{\Omega_k}|Du|^{\gamma-2}Du\cdot Dw_k\ w_k^\beta \le \frac{c_1}3 \int_{\Omega_k} w^{\frac{2\gamma+\delta-1}{1+\delta}} w_k^\beta + c_5\int_{\Omega_k} w_k^{\frac{p\gamma}{1+\delta}}  + \frac{\beta}3 \int_{\Omega_k}|Dw_k|^2 \ w_k^{\beta-1}.
\end{equation}
for some $c_5 = c_5(\delta, d, \gamma, p) > 0$.

\medskip

Plug now \eqref{ineq12}, \eqref{ineq2} and \eqref{ineq5} into \eqref{ineq15} to obtain
\begin{equation}\label{ineq6}
\frac\beta3\int_{\Omega_k}w_k^{\beta-1}|Dw_{k}|^2 \leq 
c_3 \int_{\Omega_k}(1+f)^p  +c_4\|f\|_{L^q(Q)}^p \left(\int_{\Omega_k}(1+|Du|^2)^{\frac{\delta}{2}\frac{p q}{q-p}}\right)^{\frac{q-p}{q}} 
+  c_5\int_{\Omega_k} w_k^{\frac{p\gamma}{1+\delta}}.
\end{equation}
Sobolev's inequality related to the continuous embedding of $W^{1,2}(Q)$ into $L^{\frac{2d}{d-2}}(Q)$ reads (for $c_6 = c_6 (d, \delta, \gamma, p)$)
\[
\frac{\beta}{3}\int_{Q} w_k^{\beta-1}|Dw_{k}|^2\geq c_6 \left[\left(\int_{Q}w_k^{(\beta+1)\frac{d}{d-2}}\right)^{\frac{d-2}{d}} - \int_{Q}w_k^{\beta+1}\right],
\]
hence
\begin{multline*}
c_6 \left(\int_{\Omega_k}w_k^{(\beta+1)\frac{d}{d-2}}\right)^{\frac{d-2}{d}} \leq \\ c_3 \int_{\Omega_k}(1+f)^p  +c_4\|f\|_{L^q(Q)}^p \left(\int_{\Omega_k}(1+|Du|^2)^{\frac{\delta}{2}\frac{p q}{q-p}}\right)^{\frac{q-p}{q}} 
+  c_5\int_{\Omega_k} w_k^{\frac{p\gamma}{1+\delta}} + c_6\int_{\Omega_k}w_k^{\beta+1}.
\end{multline*}
We finally choose $\delta > 0$ small enough so that $\delta\frac{p q}{q-p} < 1$. Recall that $p < q$, so using repeatedly H\"older's and Young's inequalities we obtain
\begin{align*}
c_3 \int_{\Omega_k}(1+f)^p & \le c_3 \|1+f\|_{L^q(Q)}^p |\Omega_k|^{\frac{q-p}{q}}, \\
c_4\|f\|_{L^q(Q)}^p \left(\int_{\Omega_k}(1+|Du|^2)^{\frac{\delta}{2}\frac{p q}{q-p}}\right)^{\frac{q-p}{q}} & \le c_4 \|f\|_{L^q(Q)}^p \Big\|\sqrt{1 + |Du|^2} \Big\|_{L^1(Q)}^{\delta p} |\Omega_k|^{(1- \delta\frac{p q}{q-p} )\frac{q- p}q}, \\
c_5\int_{\Omega_k} w_k^{\frac{p\gamma}{1+\delta}} & \le \frac{c_6}2\int_{\Omega_k} w_k^{\frac{q\gamma}{1+\delta}} + c_7 |\Omega_k|, \\
c_6\int_{\Omega_k}w_k^{\beta+1} & \le \frac{c_6}2 \int_{\Omega_k}w_k^{(\beta+1)\frac{d}{d-2}} + c_8 |\Omega_k|.
\end{align*}
Recalling that $(\beta+1)\frac{d}{d-2} = \frac{q\gamma}{1+\delta}$ and $\|f\|_{L^q(Q)} + \|Du\|_{L^1(Q)} \le M$, we obtain
\begin{multline*}
\left(\int_{Q}w_k^{\frac{q\gamma}{1+\delta}} \right)^{\frac{d-2}{d}} \leq \int_{Q} w_k^{\frac{q\gamma}{1+\delta}} + \\
\frac{c_3}{c_6} \|1+f\|_{L^q(Q)}^p |\Omega_k|^{\frac{q-p}{q}}  + \frac{c_4}{c_6} \|f\|_{L^q(Q)}^p \Big\|\sqrt{1 + |Du|^2} \Big\|_{L^1(Q)}^{\delta p} |\Omega_k|^{\left(1 - \delta \frac{p q}{q-p} \right)\frac{q-p}{p}} + \frac{c_7+c_8}{c_6}  |\Omega_k| \le \\
\int_{Q} w_k^{\frac{q\gamma}{1+\delta}} \quad  + \quad 
\underbrace{ \frac{c_3}{c_6} (1+M)^p |\Omega_k|^{\frac{q-p}{q}} +  \frac{c_4}{c_6} M^p (1+M)^{\delta p} |\Omega_k|^{\left(1 - \delta\frac{p q}{q-p} \right)\frac{q-p}{p}} + \frac{c_7+c_8}{c_6}  |\Omega_k| }_{=: \ \omega(|\Omega_k|)}
\end{multline*}
Replacing $w_k$ by its definition provides the assertion (up to an additional constant in front of $\omega$).

\smallskip

If the choice of $p$ in \eqref{pcho} does not satisfy $p > 2$, just pick $\tilde p$ so that $p < \tilde p < q$ and $\tilde p > 2$, and proceed in the same way.  Then, \eqref{cho2} becomes
\begin{equation}\label{chobis}
(\beta+1)\frac{d}{d-2} > \frac{\gamma q}{1+\delta}\ ,
\end{equation}
so it suffices to apply once again H\"older's and Young's inequalities to get the same assertion (with an additional term in $ \omega$).

\end{proof}

\section{Further remarks}

\begin{rem}\label{fail} \textit{General failure of \eqref{max} when $q \le d \frac{\gamma-1}\gamma$. } In the critical case $q = d \frac{\gamma-1}\gamma$ one may consider the family of functions $v_\eps$ defined as follows, for $\eps \in (0,1]$: let $\chi \in C^\infty_0\big((1,+\infty)\big)$ be a non-negative cutoff function, $\chi \equiv 1$ on $[2,+\infty)$, and $v_\eps(x) = v_\eps(|x|)$, where
\[
v_\eps(r) = c \int_r^{1/2} s^{-\frac1{\gamma-1}}\chi\Big(\frac{s}{\eps}\Big)ds, \qquad |c|^{\gamma} = - \left(d-1 - \frac1{\gamma-1}\right)c.
\]
Then, on $B_{1/2} := \{|x| < 1/2\}$,
\[
-\Delta v_\eps + |Dv_\eps|^\gamma =\frac c \eps |x|^{-\frac1{\gamma-1}}\chi'(\eps^{-1}|x|) + |c|^\gamma\big(\chi^\gamma(\eps^{-1}|x|) - \chi(\eps^{-1}|x|)\big)|x|^{-\frac\gamma{\gamma-1}}\quad=:f_\eps(x),
\]
and $v_\eps = 0$ on $\partial B_{1/2}$. Therefore, there exists $\overline M > 0$, depending on $c, d, \gamma, \chi$ only, such that 
\[
\text{$\|f_\eps\|_{L^{d \frac{\gamma-1}\gamma}(B_{1/2})} = \overline M$ \ \ for all $\eps \in (0,1/4]$, \quad but $ \big\| |Du|^\gamma \big\|_{L^{d \frac{\gamma-1}\gamma}(B_{1/2})} \to \infty $ \ \ as $\eps \to 0$. }
\]
Note that the example is meaningful only if $\gamma > \frac d{d-1}$, that is when $d \frac{\gamma-1}\gamma > 1$. Note also that though $v_\eps$ is not periodic, being smooth on $B_{1/2}$ and vanishing on $\partial B_{1/2}$, it is straightforward to produce similar examples in the periodic setting. Finally, different choices of the truncation $\chi(|x|) = \chi_\eps(|x|)$ lead to counterexamples in the regime $q < d \frac{\gamma-1}\gamma$.

Note however that existence of weak solutions to the viscous Hamilton-Jacobi equation \eqref{hj} can be obtained when $f \in L^q(Q)$ and $q = d \frac{\gamma-1}\gamma$ (at least for the Dirichlet problem), provided that $\|f\|_{L^q}$ is small, see e.g. \cite{FM1, gmp}. Therefore, we do not exclude that \eqref{max} holds even when $q = d \frac{\gamma-1}\gamma$,  under extra smallness assumptions on $\|f\|_{L^q}$.
\end{rem}

\begin{rem} \textit{$d=1,2$. } Theorem \ref{maint} is stated in dimension $d \ge 3$, but the proof for $d = 1,2$ follows identical lines. As it usually happens, the point is that in the latter case $W^{1,2}(Q)$ is continuously embedded into $L^p(Q)$ for all finite $p \ge 1$, and not only into $L^{\frac{2d}{d-2}}(Q)$. 
\end{rem}

\begin{rem}\label{strong} \textit{Less regularity of $u$. } Theorem \ref{maint} holds more in general for (strong) solutions $u \in W^{2,q} \cap W^{1, \gamma q}(Q)$ of the equation. Indeed, consider a sequence $\psi_\eps$ of standard compactly supported regularizing kernels, and observe that $u_\eps = u \star \psi_\eps$ satisfies
\[
-\Delta u_\eps + |Du_\eps|^\gamma=f \star \psi_\eps + |Du_\eps|^\gamma - |Du|^\gamma \star \psi_\eps.
\]
For $0 < \eps \le \eps_0$
\[
\|f \star \psi_\eps + |Du_\eps|^\gamma - |Du|^\gamma \star \psi_\eps\|_{L^q(Q)} + \|Du_\eps\|_{L^1(Q)} \le M+1,
\]
so applying Theorem \ref{maint} to $u_\eps$ and passing to the limit $\eps \to 0$ yields
\[
\|\Delta u\|_{L^q(Q)} + \big\| |Du|^\gamma \big\|_{L^q(Q)} \le K(M+1, \gamma, q, d ).
\]
More generally, Theorem \ref{maint} continues to hold for solutions that can be obtained as limits of smooth approximations.
\end{rem}

\begin{rem}\label{gham}\textit{More general Hamiltonians.} Theorem \ref{maint} can be easily generalized to more general equations of the form
\[
-\Delta u + H(Du)=f,
\]
where $H : \R^d \to \R$ satisfies, e.g.
\[
\Big| H(r) - c_1 |r|^\gamma \Big| \le c_2 \qquad \text{for all $r \in \R^d$,}
\]
for some $c_1, c_2 \in \R$ and $\gamma > 1$.
\end{rem}

\small

\medskip

\begin{flushright}
\noindent \verb"cirant@math.unipd.it"\\
Dipartimento di Matematica ``Tullio Levi-Civita''\\ Universit\`a di Padova\\ Via Trieste 63, 35121 Padova (Italy)\\
\smallskip
\noindent \verb"alessandro.goffi@math.unipd.it"\\
Dipartimento di Matematica ``Tullio Levi-Civita'' \\
Universit\`a di Padova\\
via Trieste 63, 35121 Padova (Italy)
\end{flushright}

\end{document}